\date{\today}
\numberwithin{equation}{section}
\newcommand{\dv}{\mathrm{div}\,}
\newcommand{\cl}{\mathrm{curl}\,}
\newcommand{\R}{{\mathbb{R}^3}}
\newcommand{\rt}{\mathrm{rot}\,}
\newtheorem{Theorem}{Theorem}[section]
\newtheorem{Lemma}[Theorem]{Lemma}
\begin{document}

\title[Regularity criterion for full MHD equations]
 { A regularity criterion for the 3D full compressible magnetohydrodynamic equations  with zero heat conductivity}

\author[J.-S. Fan]{Jishan Fan}
\address{ Department of Applied Mathematics,
 Nanjing Forestry University, Nanjing 210037, P.R.China}
\email{fanjishan@njfu.edu.cn}

\author[F.-C. Li]
{Fucai Li}
\address{Department of Mathematics, Nanjing University, Nanjing
 210093, P.R. China}
 \email{fli@nju.edu.cn}
%
 \author[G. Nakamura]{Gen Nakamura}
\address{Department of Mathematics,  Hokkaido University, Sapporo, 060-0810, Japan}
 \email{nakamuragenn@gmail.com}

%

\begin{abstract}
We establish a regularity criterion for the 3D full compressible magnetohydrodynamic equations  with zero heat conductivity and vacuum in a bounded domain.
 \end{abstract}

\keywords{compressible magnetohydrodynamic equations, zero heat conductivity, regularity criterion. }
\subjclass[2010]{76W05, 35Q60, 35B44.}

\maketitle

\section{Introduction}

In this paper, we consider the 3D full compressible magnetohydrodynamic equations in a bounded domain $\Omega\subset \mathbb{R}^3$:
\begin{align}
&\partial_t\rho+\dv(\rho u)=0\label{1.1}\\
&\partial_t(\rho u)+\dv(\rho u\otimes u)+\nabla p-\mu\Delta u-(\lambda+\mu)\nabla\dv u=\rt b\times b,\label{1.2}\\
&C_V[\partial_t(\rho\theta)+\dv(\rho u\theta)]-\kappa\Delta\theta+p\dv u=\frac\mu2|\nabla u+\nabla u^t|^2+\lambda(\dv u)^2+\nu|\rt b|^2,\label{1.3}\\
&\partial_tb+\rt(b\times u)=\nu\Delta b,\ \ \dv b=0,\label{1.4}
\end{align}
with the initial and boundary conditions
\begin{align}
&u=0,\ \ \kappa\frac{\partial\theta}{\partial n}=0,\ \ b\cdot n=0, \ \  \rt b\times n=0\ \ \mathrm{on}\ \ \partial\Omega\times(0,\infty),\label{1.5}\\
&(\rho,u,\theta,b)(\cdot,0)=(\rho_0,u_0,\theta_0,b_0)\ \ \mathrm{in}\ \ \Omega\subset\R.\label{1.6}
\end{align}
Here the unknowns $\rho, u, p, \theta$, and $b$ stand for the density, velocity, pressure, temperature, and magnetic field, respectively. The physical constants $\mu$ and $\lambda$ are the shear viscosity and bulk viscosity of the fluid and satisfy $\mu>0$ and $\lambda+\frac23\mu\geq0$. $C_V>0$ is the specific heat at constant volume and $\kappa>0$ is the heat conductivity. $\nu>0$ is the magnetic diffusivity. $\nabla u^t$ denotes the transpose of the matrix $\nabla u$. We assume that $\Omega$ is a bounded and simply connected domain in $\R$ with smooth boundary $\partial\Omega$. We use $n$ to denote the outward unit normal vector to $\partial\Omega$.

The full  compressible magnetohydrodynamic equations \eqref{1.1}-\eqref{1.4} can be rigorous derivation from the compressible Navier-Stokes-Maxwell system \cite{JL}.
Due to the physical importance of the magnetohydrodynamics, there are a lot of literature on the system \eqref{1.1}-\eqref{1.4}, among others, we mention
\cite{1} on the local strong solutions, \cite{2,3,4} on  the global weak solutions, \cite{JJL3,6} on  low Mach number limit, and
 \cite{7} on the  time decay of smooth small solutions.

%

Assume that the pressure take the form $p=R\rho\theta$ with $R$ being the generic gas constant.

In \cite{8}, Huang and Li proved the following regularity criterion
\begin{equation}
\rho\in L^\infty(0,T;L^\infty)\ \ \mathrm{and}\ \ u\in L^s(0,T;L^r)\ \ \mathrm{with}\ \ \frac2s+\frac3r=1\ \ \mathrm{and}\ \ 3<r\leq\infty,\label{1.7}
\end{equation}
with $b$ satisfying the homogeneous Dirichlet boundary condition $b=0$ on $\partial\Omega\times(0,\infty)$. Later this result was  generalized in \cite{9} to the case of the boundary condition \eqref{1.5}, i.e.,
\begin{equation}
b\cdot n=0, \quad \rt b\times n=0\ \ \mathrm{on}\ \ \partial\Omega\times(0,\infty).\label{1.8}
\end{equation}

When considering the system \eqref{1.1}-\eqref{1.4} in a two dimensional domain,  Lu, Chen and Huang \cite{10} showed the following regularity criterion
\begin{equation}
\dv u\in L^1(0,T;L^\infty)\label{1.9}
\end{equation}
with $b$ satisfying the boundary condition $b=0$ on $\partial\Omega\times(0,\infty)$. Here we remark that same result can be proved for $b$ satisfying the
boundary condition: $b\cdot n=0, \rt b=0$ on $\partial\Omega\times(0,\infty)$.  An related weak result was obtained in \cite{13}.

Very recently,  Huang and Wang \cite{11} establish the following regularity criterion
\begin{equation}
\rho,\theta,b\in L^\infty(0,T;L^\infty)\ \ \mathrm{with}\ \ 2\mu>\lambda.\label{1.10}
\end{equation}
for the system \eqref{1.1}-\eqref{1.4} in the whole space $\mathbb{R}^3$ with $\kappa=\nu=0$.

The aim of this paper is to show that the regularity criterion \eqref{1.10} still hold  for 
 the system \eqref{1.1}-\eqref{1.4} in a bounded domain with the boundary condition \eqref{1.5} when $\kappa=0$ and $\nu=1$. We will prove
\begin{Theorem}
\label{th1.1} Let $\kappa=0$ and $\nu=1$. For $q\in(3,6]$, assume that the initial data $(\rho_0\geq0, u_0, p_0= R\rho_0\theta_0\geq0, b_0)$ satisfy
\begin{equation}\label{1.11}
\left\{\begin{array}{l}
 \rho_0,p_0\in W^{1,q}(\Omega),\ u_0\in H_0^1(\Omega)\cap H^2(\Omega),\ b_0\in H^2\ \ \mathrm{with}\ \ \dv b_0=0\ \ \mathrm{in}\ \ \Omega,\\
b_0\cdot n=0,\, \rt b_0\times n=0\ \ \mathrm{on}\ \ \partial\Omega
\end{array} \right.
\end{equation}
and the compatibility condition
\begin{equation}
-\mu\Delta u_0-(\lambda+\mu)\nabla\dv u_0+\nabla p_0-\rt b_0\times b_0=\sqrt{\rho_0}g,\label{1.12}
\end{equation}
with $g\in L^2(\Omega)$. Let $(\rho, u, p, b)$ be a local strong solution to the problem \eqref{1.1}-\eqref{1.6}. If \eqref{1.10} holds true with $0<T<\infty$, then the solution $(\rho, u, p, b)$ can be extended beyond $T>0$.
\end{Theorem}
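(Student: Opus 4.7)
The plan is a continuation argument. Under \eqref{1.10}, the aim is to establish bounds on the norms in \eqref{1.11}--\eqref{1.12}, uniform in $t\in[0,T)$, so that the local existence theorem can be reapplied at $t=T$ to extend the solution. Note first that $p=R\rho\theta$ and \eqref{1.10} immediately give $p\in L^\infty(0,T;L^\infty)$, and that the presence of magnetic diffusion ($\nu=1$) makes the estimates on $b$ strictly easier than in the $\kappa=\nu=0$ whole-space setting of \cite{11}. Testing \eqref{1.2} with $u$ and \eqref{1.4} with $b$ and using \eqref{1.5} (the identity $n\times(b\times u)=(n\cdot u)b-(n\cdot b)u=0$ on $\partial\Omega$ kills the convective magnetic boundary term, while $\rt b\times n=0$ kills the diffusive one) produces the basic energy bound
\begin{equation*}
\sup_{0\le t\le T}\bigl(\|\sqrt\rho\,u\|_{L^2}^2+\|b\|_{L^2}^2\bigr)+\int_0^T\bigl(\|\nabla u\|_{L^2}^2+\|\rt b\|_{L^2}^2\bigr)\,dt\le C,
\end{equation*}
after controlling $\int p\dv u$ by Young's inequality and $p\in L^\infty$.

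The decisive step is the first-order energy estimate. Testing \eqref{1.2} with $\dot u:=\partial_t u+u\cdot\nabla u$ (which vanishes on $\partial\Omega$ since $u=0$ there) and integrating by parts yields
\begin{equation*}
\tfrac12\tfrac{d}{dt}\!\int\!\bigl[\mu|\nabla u|^2+(\lambda+\mu)(\dv u)^2-2p\,\dv u\bigr]dx+\!\int\!\rho|\dot u|^2dx=\!\int\! p_t\,\dv u+\!\int\!(\rt b\times b)\cdot\dot u+\mathcal R(\nabla u),
\end{equation*}
where $\mathcal R(\nabla u)$ is cubic in $\nabla u$, coming from the commutator $\nabla(u\cdot\nabla u)$. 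In parallel, testing \eqref{1.4} with $-\Delta b=\rt\,\rt b$ (valid since $\dv b=0$) together with \eqref{1.5} gives $\tfrac{d}{dt}\|\rt b\|_{L^2}^2+\|\nabla^2 b\|_{L^2}^2\lesssim\|b\|_{L^\infty}^2\|\nabla u\|_{L^2}^2$. To handle $\int p_t\,\dv u$, I would use the pressure equation $\partial_tp+u\cdot\nabla p+\gamma p\,\dv u=(\gamma-1)Q$ derived from \eqref{1.1} and \eqref{1.3} (with $\gamma-1=R/C_V$ and $Q\ge0$ the right-hand side of \eqref{1.3}), together with $\rho,\theta\in L^\infty$. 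To absorb $\mathcal R(\nabla u)$, introduce the effective viscous flux $F:=(\lambda+2\mu)\dv u-p-\tfrac12|b|^2$ and vorticity $\omega:=\cl u$, which satisfy the elliptic equations $\Delta F=\dv\bigl(\rho\dot u-\dv(b\otimes b)\bigr)$ and $-\mu\Delta\omega=\cl\bigl(\rho\dot u-\dv(b\otimes b)\bigr)$ with boundary data induced by \eqref{1.5}. Elliptic regularity then gives $\|\nabla u\|_{L^6}\lesssim\|\sqrt\rho\dot u\|_{L^2}+\|b\|_{L^\infty}\|\nabla b\|_{L^2}+1$, and Gagliardo--Nirenberg controls $\int|\nabla u|^3\le\|\nabla u\|_{L^2}^{3/2}\|\nabla u\|_{L^6}^{3/2}$. \emph{The restriction $2\mu>\lambda$ enters precisely here}: it guarantees that, after absorbing $\mathcal R(\nabla u)$ into $\int\rho|\dot u|^2$ and the dissipation $\mu|\nabla u|^2+(\lambda+\mu)(\dv u)^2$, the residual algebraic coefficients remain strictly positive. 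Gronwall then yields $\|\nabla u\|_{L^\infty_tL^2}+\|\rt b\|_{L^\infty_tL^2}+\|\sqrt\rho\dot u\|_{L^2_{t,x}}+\|\nabla^2 b\|_{L^2_{t,x}}\le C$.

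With these bounds in hand, elliptic regularity for the Lam\'e operator on $\Omega$ with Dirichlet data gives $\|u\|_{H^2}\lesssim\|\sqrt\rho\dot u\|_{L^2}+\|\nabla p\|_{L^2}+\|b\|_{L^\infty}\|\nabla b\|_{L^2}$. The $L^q$-norms of $\nabla\rho,\nabla p$ propagate by differentiating \eqref{1.1} and the pressure equation, testing with $|\nabla\rho|^{q-2}\nabla\rho$ and $|\nabla p|^{q-2}\nabla p$, and invoking the logarithmic Beale--Kato--Majda-type bound $\|\nabla u\|_{L^\infty}\lesssim 1+(\|\dv u\|_{L^\infty}+\|\cl u\|_{L^\infty})\log(e+\|u\|_{W^{2,q}})$, with $\|\dv u\|_{L^\infty}$ and $\|\cl u\|_{L^\infty}$ controlled via $F$ and $\omega$; Gronwall closes $\|\nabla\rho\|_{L^q}+\|\nabla p\|_{L^q}\le C$. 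Differentiating \eqref{1.2} in time, testing with $u_t$, and running the analogous argument for $b$ finally delivers $\|\sqrt\rho u_t\|_{L^\infty_tL^2}+\|\nabla u_t\|_{L^2_{t,x}}+\|b\|_{L^\infty_tH^2}\le C$, supplying the regularity at $t=T$ required by the local theorem (in particular, the compatibility condition \eqref{1.12} at $T$ by taking $g=\sqrt{\rho(T)}\,\dot u(T)$). The principal obstacle is the first-order estimate above: the cubic term $\int|\nabla u|^3$ arising from convection cannot be bounded by energy dissipation alone, and it is the combination of the elliptic structure of $F$ and $\omega$ under the bounded-domain conditions \eqref{1.5} together with the algebraic restriction $2\mu>\lambda$ that makes the closure possible; the boundary treatment of $F$ (whose natural condition is inhomogeneous Neumann) is the principal departure from the whole-space argument of \cite{11}.
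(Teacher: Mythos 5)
There is a genuine gap at what you yourself identify as the decisive step. You propose to test \eqref{1.2} by $\dot u$, to treat $\int p_t\,\dv u$ via the pressure equation, and to absorb the resulting cubic term in $\nabla u$ using the effective viscous flux together with the claim that ``$2\mu>\lambda$ guarantees the residual algebraic coefficients remain strictly positive.'' That is not how the cubic term can be closed, and it is not where $2\mu>\lambda$ enters. The right-hand side of the pressure equation contains $\frac\mu2|\nabla u+\nabla u^t|^2+\lambda(\dv u)^2$, so $\int p_t\,\dv u$ (and likewise the convection commutator) produces $\int|\nabla u|^3$; after Gagliardo--Nirenberg and the elliptic bound $\|\nabla u\|_{L^6}\leq C(\|\sqrt\rho\dot u\|_{L^2}+\dots)$, Young's inequality leaves a term of order $\|\nabla u\|_{L^2}^6$. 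Since at this stage one only knows $\int_0^T\|\nabla u\|_{L^2}^2\,dt\leq C$ from the basic energy estimate, the resulting differential inequality for $\|\nabla u\|_{L^2}^2$ is superlinear and Gronwall does not close on $[0,T]$, no matter what the sign of $2\mu-\lambda$ is: a cubic term cannot be absorbed into a quadratic dissipation by any algebraic condition on the viscosities. In addition, the inhomogeneous Neumann boundary data for $F$ that you flag is left unresolved, whereas the whole point of working in a bounded domain is to avoid it.

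The paper closes this step by two ingredients you do not have. First, following \cite{11}, it establishes the Hoff-type estimate \eqref{3.3}, $\int\rho|u|^4dx+\int_0^T\int|u|^2|\nabla u|^2dxdt\leq C$, and this is precisely where the hypothesis $2\mu>\lambda$ in \eqref{1.10} is used. Second, instead of the pressure equation it decomposes $u=v+w$ with $Lv=\nabla p$, $Lw=\rho\dot u-\rt b\times b$ (Lemma \ref{le2.1}, Dirichlet data, no flux boundary issue), tests \eqref{1.2} by $u_t$, and treats $-\int p_t\dv v$ as an exact time derivative \eqref{3.10} and $-\int p_t\dv w$ through the \emph{conservative} total energy equation \eqref{3.8}: there the quadratic terms $|\nabla u|^2$ sit inside divergences, so after integrating by parts against $\nabla\dv w$ one only meets quantities like $\int|u|^2|\nabla u|^2$ and $\|\sqrt\rho u\|_{L^4}^2\|\nabla w\|_{L^4}^2$, which are controlled by \eqref{3.3} and the elliptic bound \eqref{3.7}; no uncontrolled $\int|\nabla u|^3$ ever appears, and \eqref{3.16} follows. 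Your later steps (material-derivative estimate, $\nabla\rho,\nabla p\in L^q$ via the BMO/Brezis--Wainger logarithmic inequality, time-derivative estimates for $b$) are in the spirit of the paper, but without \eqref{3.3} and the $v$--$w$/energy-equation device the first-order estimate, and hence the whole argument, does not go through.
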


We mention that when taking $b=0$ in the system \eqref{1.1}-\eqref{1.4}, it is reduced to the full compressible Navier-Stokes system and  a lot of  regularity criteria
can be found in \cite{16,17,18} and the references cited therein.

The remainder of this paper is devoted to the proof of Theorem \ref{th1.1}. We give some preliminaries in section 2 and present the proof of Theorem \ref{th1.1}
in section 3.
Below we shall use the letter $C$ to denote the positive constant which may change from
line to line.

\section{Preliminaries}

First, we consider the boundary value problem for the Lam\'{e} operator $L$
\begin{equation}\label{2.1}
  \left\{\begin{array}{l}
LU\triangleq \mu\Delta U+(\mu+\lambda)\nabla\dv U=F\ \ \mathrm{in}\ \ \Omega,\\
U(x)=0\ \ \mathrm{on}\ \ \partial\Omega.
\end{array}\right.
\end{equation}
Here $U=(U_1,U_2,U_3), F=(F_1,F_2,F_3)$. It is well known that the system \eqref{2.1} is a strongly elliptic system, thus there exists a unique weak solution $U\in H_0^1(\Omega)$ for $F\in W^{-1,2}(\Omega)$.
\begin{Lemma}
\label{le2.1} Let $q\in(1,\infty)$ and $U$ be a solution of \eqref{2.1}. There exists a constant $C$ depending only on $\lambda, \mu, q$ and $\Omega$ such that the following estimates hold:\\
(1) if $F\in L^q(\Omega)$, then
\begin{equation}
\|U\|_{W^{2,q}(\Omega)}\leq C\|F\|_{L^q(\Omega)};\label{2.3}
\end{equation}
(2) if $F\in W^{-1,q}(\Omega)$ (that is, $F=\dv f$ with $f=(f_{ij})_{3\times3},f_{ij}\in L^q(\Omega))$, then
\begin{equation}
\|U\|_{W^{1,q}(\Omega)}\leq C\|f\|_{L^q(\Omega)};\label{2.4}
\end{equation}
(3) if $F=\dv f$ with $f_{ij}=\partial_kh_{ij}^k$ and $h_{ij}^k\in W_0^{1,q}(\Omega)$ for $i,j,k=1,2,3$, then
\begin{equation}
\|U\|_{L^q(\Omega)}\leq C\|h\|_{L^q(\Omega)}.\label{2.5}
\end{equation}
\end{Lemma}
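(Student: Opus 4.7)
The plan is to derive all three estimates from the Agmon--Douglis--Nirenberg (ADN) $L^q$-theory for strongly elliptic systems with Dirichlet data, together with a duality argument for (3). First I would verify that the Lam\'e operator is strongly elliptic: its principal symbol $\sigma(\xi)=\mu|\xi|^2 I+(\mu+\lambda)\xi\otimes\xi$ satisfies $\sigma(\xi)\eta\cdot\eta\ge\min\{\mu,\,2\mu+\lambda\}|\xi|^2|\eta|^2$, and the physical hypothesis $\mu>0$, $\lambda+\tfrac{2}{3}\mu\ge 0$ forces $2\mu+\lambda\ge\tfrac{4}{3}\mu>0$, so $L$ is uniformly elliptic. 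The Lopatinskii--Shapiro complementing condition is routinely verified for the Dirichlet problem of linear elasticity.

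For (1), I would invoke the ADN $L^q$ a priori estimate to obtain $\|U\|_{W^{2,q}(\Omega)}\le C(\|LU\|_{L^q(\Omega)}+\|U\|_{L^q(\Omega)})$; the lower-order term is absorbed using uniqueness for the homogeneous Dirichlet problem, which follows from the coercivity identity
\[
-\int_\Omega LU\cdot U\,dx=\mu\int_\Omega|\nabla U|^2\,dx+(\lambda+\mu)\int_\Omega(\dv U)^2\,dx\ge\mu\int_\Omega|\nabla U|^2\,dx
\]
for $U\in H^1_0(\Omega)$, combined with bootstrapping when $q\ne 2$. For (2), I would invoke the shifted $W^{1,q}$ regularity theory for strongly elliptic second-order systems with Dirichlet data, which is classically derived from Calder\'on--Zygmund theory applied to the Green's kernel of $L$ -- equivalently, it is the ADN framework with the source index shifted by one.

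For (3), I would use (1) as a black box in a direct duality argument. Given $\psi\in C_c^\infty(\Omega;\R)$, let $V\in W^{2,q'}(\Omega)\cap H^1_0(\Omega)$ be the solution of $LV=\psi$ furnished by (1), so that $\|V\|_{W^{2,q'}(\Omega)}\le C\|\psi\|_{L^{q'}(\Omega)}$ with $q'=q/(q-1)$. Since $L$ is formally self-adjoint and both $U$ and $V$ vanish on $\partial\Omega$, integrating by parts twice (the first step using $V|_{\partial\Omega}=0$, the second using $h_{ij}^k\in W^{1,q}_0(\Omega)$) yields
\[
\int_\Omega U\cdot\psi\,dx=\int_\Omega U\cdot LV\,dx=\int_\Omega LU\cdot V\,dx=\int_\Omega h_{ij}^k\,\partial_j\partial_k V_i\,dx.
\]
H\"older's inequality bounds the right-hand side by $\|h\|_{L^q}\|\nabla^2 V\|_{L^{q'}}\le C\|h\|_{L^q}\|\psi\|_{L^{q'}}$, and taking the supremum over $\psi$ yields \eqref{2.5}.

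The main obstacle is largely organizational: assembling the ADN framework for (1) and (2), verifying the complementing condition for Lam\'e with Dirichlet data, and checking in (3) that every boundary contribution vanishes in the twofold integration by parts. The last point is precisely where the hypothesis $h_{ij}^k\in W^{1,q}_0(\Omega)$, rather than merely $W^{1,q}(\Omega)$, enters in an essential way.
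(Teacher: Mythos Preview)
Your proposal is correct and follows exactly the approach the paper takes: it cites the Agmon--Douglis--Nirenberg theory for (1) and (2), and derives (3) from (1) by the duality argument you describe. You have simply fleshed out the details (ellipticity of the symbol, absorption of the lower-order term via uniqueness, and the two integrations by parts in the duality step) that the paper leaves implicit.
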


\begin{proof}
The estimates \eqref{2.3} and \eqref{2.4} are classical for strongly elliptic systems, see for example \cite{19}. The estimate \eqref{2.5} can be proved by a duality argument with the help of \eqref{2.3}.
\end{proof}

We need an endpoint estimate for $L$ in the case $q=\infty$. Let $BMO(\Omega)$ stand for the John-Nirenberg space of bounded mean oscillation whose norm is defined by $$\|f\|_{BMO(\Omega)}:=\|f\|_{L^2(\Omega)}+[f]_{BMO},$$ with
\begin{align*}
[f]_{BMO(\Omega)}:=&\sup\limits_{x\in\Omega,r\in(0,d)}\frac{1}{|\Omega_r(x)|}\int_{\Omega_r(x)}|f(y)-f_{\Omega_r(x)}|dy,\\
f_{\Omega_r(x)}:=&\frac{1}{|\Omega_r(x)|}\int_{\Omega_r(x)}f(y)dy.
\end{align*}
Here $\Omega_r(x):=B_r(x)\cap\Omega, B_r(x)$ is a ball with center $x$ and radius $r,d$ is the diameter of $\Omega$ and $|\Omega_r(x)|$ denotes the Lebesque measure of $\Omega_r(x)$.
\begin{Lemma}[\!\cite{20})]
\label{le2.2} If $F=\dv f$ with $f=(f_{ij})_{3\times3}, f_{ij}\in L^\infty(\Omega)\cap L^2(\Omega)$, then $\nabla U\in BMO(\Omega)$ and there exists a constant $C$ depending only on $\lambda, \mu$ and $\Omega$ such that
\begin{equation}
\|\nabla U\|_{BMO(\Omega)}\leq C(\|f\|_{L^\infty(\Omega)}+\|f\|_{L^2(\Omega)}).\label{2.6}
\end{equation}
\end{Lemma}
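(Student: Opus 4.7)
The plan is to represent $U$ via the Dirichlet Green's tensor $G(x,y)$ of the Lam\'e operator $L$ on $\Omega$ and then appeal to the endpoint $L^\infty\to BMO$ boundedness of Calder\'on-Zygmund singular integrals, combined with the $L^2$-estimate of Lemma \ref{le2.1}(2) to handle the $L^2$ part of the BMO norm. Indeed, \eqref{2.4} with $q=2$ immediately gives $\|\nabla U\|_{L^2(\Omega)}\leq C\|f\|_{L^2(\Omega)}$, so only the mean-oscillation seminorm $[\nabla U]_{BMO(\Omega)}$ requires a genuinely new estimate.

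Decompose $G(x,y)=\Gamma(x-y)+R(x,y)$, where $\Gamma_{ij}(z)=(4\pi)^{-1}\bigl(A\delta_{ij}/|z|+Bz_iz_j/|z|^3\bigr)$ is the Kelvin-Somigliana fundamental solution of $L$ on $\mathbb{R}^3$ (with constants $A,B$ depending on $\mu,\lambda$), and $R$ is the boundary corrector, smooth in $\bar\Omega\times\bar\Omega$, making $G(x,y)=0$ for $x\in\partial\Omega$. By the self-adjointness of $L$, $G(x,\cdot)$ also vanishes on $\partial\Omega$, so integration by parts in the representation $U(x)=\int_\Omega G(x,y)\,\dv_y f(y)\,dy$ loses no boundary term and yields
\[
\nabla U(x)=-\int_\Omega \nabla_x\nabla_y G(x,y)\,f(y)\,dy=-\int_\Omega\bigl[\nabla_x\nabla_y\Gamma(x-y)+\nabla_x\nabla_y R(x,y)\bigr]\,f(y)\,dy.
\]

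The singular kernel $\nabla_x\nabla_y\Gamma(x-y)$ is homogeneous of degree $-3$ in $x-y$ and has zero mean on every sphere, hence is of Calder\'on-Zygmund type. By the classical endpoint theorem, the associated singular integral operator maps $L^\infty(\Omega)\cap L^2(\Omega)$ boundedly into $BMO(\Omega)\cap L^2(\Omega)$ with norm at most $C(\|f\|_{L^\infty}+\|f\|_{L^2})$. The regular part $\nabla_x\nabla_y R(x,y)$ is smooth and hence bounded, contributing at most $C\|f\|_{L^2}$ in every norm. Adding these controls yields \eqref{2.6}.

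The main analytical obstacle is verifying the Calder\'on-Zygmund cancellation (mean-zero on spheres) for the matrix-valued singular kernel $\nabla_x\nabla_y\Gamma(x-y)$, which does not follow automatically from the scalar Laplacian case because of the tensor coupling in the Lam\'e system. One route is a direct explicit computation from the Kelvin-Somigliana formula above. A cleaner alternative is to apply $\dv$ and $\cl$ to the equation, reducing the Lam\'e system to two scalar Poisson problems for $\dv U$ and $\cl U$, for which the classical $L^\infty\to BMO$ bound involving Riesz transforms is standard, and then to reconstruct $\nabla U$ via a Helmholtz decomposition; this second approach requires a separate harmonic correction to accommodate the homogeneous Dirichlet boundary condition on $U$ without spoiling the endpoint regularity, which is the main technical point in that route.
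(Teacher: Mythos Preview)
The paper does not prove this lemma at all; it is quoted directly from Acquistapace~\cite{20}. So there is no ``paper's proof'' to compare against beyond noting that the cited reference establishes the result by localising near the boundary, flattening, and analysing the explicit half-space Green's kernel of the Lam\'e system.

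Your sketch contains a genuine gap. The assertion that the boundary corrector $R(x,y)=G(x,y)-\Gamma(x-y)$ is ``smooth in $\bar\Omega\times\bar\Omega$'' and that $\nabla_x\nabla_y R$ is bounded is false. Already for the scalar Laplacian in a half-space one has $R(x,y)=-\Gamma(x-y^*)$ with $y^*$ the reflection of $y$; this is singular whenever $x$ and $y$ approach the same boundary point, and $\nabla_x\nabla_y R$ is of size $|x-y^*|^{-3}$, not bounded. For a general smooth domain and for the Lam\'e system the situation is the same: $\nabla_x\nabla_y R$ is itself a Calder\'on--Zygmund kernel concentrated near the boundary, with the same $-3$ homogeneity as the principal part. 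Consequently the contribution of $R$ cannot be absorbed into a harmless $C\|f\|_{L^2}$ term; it requires the same endpoint $L^\infty\to BMO$ analysis as the interior kernel, now carried out in boundary-adapted coordinates. This is precisely the work done in~\cite{20}, and it is the heart of the lemma, not a triviality.

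Your second suggested route (pass to $\dv U$ and $\cl U$) has an analogous difficulty that you yourself flag: the Dirichlet condition on $U$ does not translate into decoupled boundary conditions for $\dv U$ and $\cl U$, so the ``harmonic correction'' you would need is exactly the object carrying the boundary singularity above. Either way, the boundary contribution is where the real analysis lies, and your write-up currently skips it.
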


Let us conclude this section by recalling a variant of the Brezis-Waigner inequality \cite{21}.
\begin{Lemma}[\!\cite{22}]\label{le2.3}
 Let $\Omega$ be a bounded Lipschitz domain in $\R$ and $f\in W^{1,q}$ with $3<q<\infty$. There exists a constant $C$ depending on $q$ and the Lipschitz property of $\Omega$ such that
\begin{equation}
\|f\|_{L^\infty(\Omega)}\leq C(1+\|f\|_{BMO(\Omega)}\ln(e+\|\nabla f\|_{L^q(\Omega)})).\label{2.7}
\end{equation}
\end{Lemma}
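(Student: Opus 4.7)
The plan is to prove \eqref{2.7} by a standard dyadic decomposition of $f$ around each point, using the BMO seminorm to control the ``low-frequency'' averages and the Morrey H\"older embedding $W^{1,q}\hookrightarrow C^{0,1-3/q}$, valid since $q>3$, to control the ``high-frequency'' oscillation; the logarithm will appear when we balance the radius against $\|\nabla f\|_{L^q(\Omega)}$.

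Fix $x\in\Omega$ and two radii $0<r<R\le d$, where $d$ is the diameter of $\Omega$. Write
\begin{equation*}
f(x)=\bigl(f(x)-f_{\Omega_r(x)}\bigr)+\bigl(f_{\Omega_r(x)}-f_{\Omega_R(x)}\bigr)+f_{\Omega_R(x)},
\end{equation*}
and treat the three terms separately. Take $R=d$, so $\Omega_R(x)=\Omega$ and the last term is bounded by $C\|f\|_{L^2(\Omega)}\le C\|f\|_{BMO(\Omega)}$. For the middle term, chain radii dyadically, $r_k=2^k r$ for $k=0,\dots,N$ with $r_N\sim d$; since $\Omega$ is Lipschitz the volumes $|\Omega_{r_k}(x)|$ and $|\Omega_{r_{k+1}}(x)|$ are comparable, so the telescoping John-Nirenberg type bound $|f_{\Omega_{r_k}(x)}-f_{\Omega_{r_{k+1}}(x)}|\le C[f]_{BMO(\Omega)}$ gives
\begin{equation*}
\bigl|f_{\Omega_r(x)}-f_{\Omega_R(x)}\bigr|\le C\,[f]_{BMO(\Omega)}\,\log(R/r).
\end{equation*}
For the first term, the Morrey embedding yields $|f(x)-f(y)|\le C\,r^{1-3/q}\|\nabla f\|_{L^q(\Omega)}$ for every $y\in\Omega_r(x)$, so after averaging over $y$,
\begin{equation*}
\bigl|f(x)-f_{\Omega_r(x)}\bigr|\le C\,r^{1-3/q}\|\nabla f\|_{L^q(\Omega)}.
\end{equation*}

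Next balance the two contributions. If $\|\nabla f\|_{L^q(\Omega)}\le 1$ take $r=d$ and only the BMO term remains, which already implies \eqref{2.7}; otherwise set $r=d\,\|\nabla f\|_{L^q(\Omega)}^{-q/(q-3)}\le d$, so that $r^{1-3/q}\|\nabla f\|_{L^q(\Omega)}=d^{1-3/q}$ and $\log(R/r)\le\frac{q}{q-3}\log\bigl(e+\|\nabla f\|_{L^q(\Omega)}\bigr)$. Adding the three estimates and taking the supremum over $x\in\Omega$ produces
\begin{equation*}
\|f\|_{L^\infty(\Omega)}\le C\bigl(1+\|f\|_{BMO(\Omega)}\log(e+\|\nabla f\|_{L^q(\Omega)})\bigr),
\end{equation*}
which is the claimed inequality.

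The main technical obstacle is the dyadic BMO telescoping near $\partial\Omega$: for $x$ close to the boundary the truncated sets $\Omega_r(x)=B_r(x)\cap\Omega$ need not have full volume $\sim r^3$, so one must invoke the Lipschitz property of $\partial\Omega$ to secure the uniform lower bound $|\Omega_r(x)|\ge c\,r^3$ for all $x\in\Omega$ and $0<r\le d$. With this in hand, consecutive averages $f_{\Omega_{r_k}(x)}$ and $f_{\Omega_{r_{k+1}}(x)}$ differ by at most $C[f]_{BMO(\Omega)}$ with $C$ independent of $x$, the Morrey inequality applied inside $\Omega_r(x)$ remains valid with a uniform constant (standard for Lipschitz domains), and the remaining steps are the classical Br\'ezis-Wainger argument on $\mathbb{R}^3$ adapted by restriction to $\Omega$.
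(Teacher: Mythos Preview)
Your argument is correct: the three-term decomposition via the averages $f_{\Omega_r(x)}$, the dyadic telescoping controlled by $[f]_{BMO(\Omega)}$, the Morrey bound $|f(x)-f_{\Omega_r(x)}|\le C r^{1-3/q}\|\nabla f\|_{L^q}$, and the choice $r=d\|\nabla f\|_{L^q}^{-q/(q-3)}$ are exactly the standard Br\'ezis--Wainger balancing, and your handling of the boundary via the Lipschitz lower volume bound $|\Omega_r(x)|\ge c r^3$ is the right fix.

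Note, however, that the paper does \emph{not} supply its own proof of this lemma: it is merely quoted from \cite{22} (Sun--Wang--Zhang), so there is no in-paper argument to compare against. Your write-up is therefore strictly more than what the paper does, and it is essentially the same proof one finds in \cite{22} (which in turn adapts the whole-space argument of Br\'ezis--Wainger \cite{21} to bounded Lipschitz domains).
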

\begin{Lemma}[\!\cite{23}]
\label{le2.4}
 Let $b$ be a solution to the Poisson equation $$-\Delta b=f\ \ in\ \ \Omega$$ with the boundary condition $$b\cdot n=0,\rt b\times n=0\ \ on\ \ \partial\Omega.$$

Then there holds
\begin{equation}
\|\nabla^2b\|_{L^p}\leq C\|f\|_{L^p}+C\|\nabla b\|_{L^2}\ \ with\ \ 1<p<\infty.\label{2.8}
\end{equation}
\end{Lemma}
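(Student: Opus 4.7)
The plan is to view the problem $-\Delta b=f$ together with the boundary conditions $b\cdot n=0$ and $\rt b\times n=0$ as a strongly elliptic vector boundary value problem, apply the Agmon--Douglis--Nirenberg (ADN) $L^p$ theory, and then close the lower-order term by a Poincar\'e-type inequality.

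First, I would verify that the pair of boundary operators $b\mapsto b\cdot n$ and $b\mapsto \rt b\times n$ constitutes a complementing (Lopatinski--Shapiro) system for the vector Laplacian. Flattening $\partial\Omega$ locally so that $n=e_3$ and writing out $\rt b\times n$ in components, the two conditions become $b_3=0$ together with $\partial_3 b_1=\partial_1 b_3$ and $\partial_3 b_2=\partial_2 b_3$; since tangential derivatives of $b_3$ vanish along the boundary once $b_3|_{\partial\Omega}=0$, these simplify to $\partial_3 b_1=\partial_3 b_2=0$. Thus the system decouples at the principal-symbol level into a Dirichlet problem for $b_3$ and Neumann problems for $b_1,b_2$, each of which is classical, and the complementing condition holds.

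The ADN $L^p$ theorem then yields the a priori estimate
\begin{equation*}
\|b\|_{W^{2,p}(\Omega)}\leq C\bigl(\|\Delta b\|_{L^p(\Omega)}+\|b\|_{L^p(\Omega)}\bigr)=C\bigl(\|f\|_{L^p(\Omega)}+\|b\|_{L^p(\Omega)}\bigr)
\end{equation*}
for every $1<p<\infty$. It remains to replace $\|b\|_{L^p}$ on the right by $\|\nabla b\|_{L^2}$. For $1<p\leq 6$, the Sobolev embedding $H^1\hookrightarrow L^p$ combined with the vector Poincar\'e inequality $\|b\|_{L^2}\leq C\|\nabla b\|_{L^2}$ (which is valid for $b\cdot n=0$ on $\partial\Omega$ in a simply connected bounded smooth domain, since the space of harmonic vector fields with vanishing normal trace is trivial there) gives $\|b\|_{L^p}\leq C\|\nabla b\|_{L^2}$. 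For $p>6$, one interpolates $\|b\|_{L^p}\leq C\|b\|_{L^2}^{\theta}\|b\|_{W^{2,p}}^{1-\theta}$ for a suitable $\theta\in(0,1)$ and absorbs the $W^{2,p}$ factor into the left-hand side via Young's inequality.

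The main obstacle is the boundary-regularity step: verifying the Lopatinski--Shapiro condition uniformly along $\partial\Omega$ and carefully tracking the mixed nature of the conditions (Dirichlet for the normal component and Neumann-type for the tangential components, which emerges only after exploiting $b_3|_{\partial\Omega}=0$). The Poincar\'e-type step is another subtle point that relies on the simple-connectedness of $\Omega$ and the triviality of the corresponding harmonic cohomology space.
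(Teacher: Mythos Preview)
The paper does not supply its own proof of this lemma; it is quoted directly from \cite{23} without argument. Hence there is nothing in the paper to compare your proposal against.

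That said, your outline is a correct and standard route. The verification of the complementing condition is right: after freezing coefficients at a boundary point with $n=e_3$, the substitution $b_3|_{\partial\Omega}=0\Rightarrow \partial_1 b_3=\partial_2 b_3=0$ is legitimate at the level of principal symbols (one is replacing $B_2,B_3$ by $B_2+\alpha\,\partial_1 B_1$, etc., which does not alter the Lopatinski--Shapiro determinant), and the resulting decoupled Dirichlet/Neumann problems are classical. The ADN estimate $\|b\|_{W^{2,p}}\leq C(\|f\|_{L^p}+\|b\|_{L^p})$ then follows, and your treatment of the lower-order term is fine. One small remark: the Poincar\'e inequality $\|b\|_{L^2}\leq C\|\nabla b\|_{L^2}$ for vector fields with $b\cdot n=0$ on $\partial\Omega$ actually does not need simple connectedness---the only obstruction is a nonzero constant vector $c$ with $c\cdot n\equiv 0$ on $\partial\Omega$, which is impossible for a bounded domain. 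Simple connectedness is what you would need for the stronger div--curl version \eqref{2.9}, and either form suffices here.
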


In the following proofs, we will use the Poincar\'{e} inequality \cite{24}:
\begin{equation}
\|b\|_{L^2}\leq C(\|\dv b\|_{L^2}+\|\rt b\|_{L^2})\label{2.9}
\end{equation}
for any $b\in H^1(\Omega)$ with $b\cdot n=0$ or $b\times n=0$ on $\partial\Omega$.

We will also use the inequality \cite{25}:
\begin{equation}
\|\nabla b\|_{L^2}\leq C(\|\dv b\|_{L^2}+\|\rt b\|_{L^2})\label{2.10}
\end{equation}
for any $b\in H^1(\Omega)$ with $b\cdot n=0$ or $b\times n=0$ on $\partial\Omega$.


\section{Proof of Theorem \ref{th1.1}}

This section is devoted to the proof of Theorem \ref{th1.1}, we only need to show a priori estimates. For simplicity, we will take $\nu=C_V=\mathcal{R}=1$.

Testing \eqref{1.2} by $u$, \eqref{1.4} by $b$, summing up the results and using \eqref{1.1} and \eqref{1.10}, we see that
\begin{align*}
&\frac12\frac{d}{dt}\int(\rho|u|^2+|b|^2)dx+\int(\mu|\nabla u|^2+(\lambda+\mu)(\dv u)^2+|\rt b|^2)dx\\
=&-\int u\nabla p dx=\int p\dv u dx\leq\frac{\lambda+\mu}{2}\int(\dv u)^2dx+C,
\end{align*}
which gives
\begin{equation}
\int(\rho|u|^2+|b|^2)dx+\int_0^T\int(|\nabla u|^2+|\rt b|^2) dx dt\leq C.\label{3.1}
\end{equation}

Integrating \eqref{1.3} over $\Omega\times(0,t)$ and using \eqref{1.10} and \eqref{3.1}, we find that
\begin{equation}
\int\rho\theta dx\leq C.\label{3.2}
\end{equation}

By the same calculations as that in \cite{11}, we get
\begin{equation}
\int\rho|u|^4 dx+\int_0^T\int|\nabla u|^2|u|^2 dx dt\leq C.\label{3.3}
\end{equation}

We define $v\in H_0^1$ satisfying
\begin{equation}
Lv:=\mu\Delta v+(\lambda+\mu)\nabla\dv v=\nabla p,\label{3.4}
\end{equation}
and $w:=u-v$.
Thanks to Lemma \ref{le2.1}, for any $1<r<\infty$, there hold
\begin{equation}
\|\nabla v\|_{L^r(\Omega)}\leq C\|p\|_{L^r(\Omega)},\|\nabla^2v\|_{L^r(\Omega)}\leq C\|\nabla p\|_{L^r(\Omega)}.\label{3.5}
\end{equation}
It is easy to see that $w$ satisfies
\begin{equation}
\mu\Delta w+(\lambda+\mu)\nabla\dv w=\rho\dot u-\rt b\times b,\label{3.6}
\end{equation}
Then it follows from Lemma \ref{le2.1} that
\begin{equation}
\|\nabla^2w\|_{L^2(\Omega)}\leq C\|\rho\dot u\|_{L^2(\Omega)}+C\|\rt b\times b\|_{L^2(\Omega)}.\label{3.7}
\end{equation}

Let $E$ be the specific energy defined by $$E:=\theta+\frac{|u|^2}{2}.$$
Then
\begin{align}
&\partial_t\left(\rho E+\frac{|b|^2}{2}\right)+\dv(\rho E u+p u+|b|^2 u)\nonumber\\
=&\frac12\mu\Delta|u|^2+\mu\dv(u\cdot\nabla u)+\lambda\dv(u\dv u)+\dv((u\cdot b)b)-\dv(\rt b\times b).\label{3.8}
\end{align}

Testing \eqref{1.2} by $u_t$ and using \eqref{1.1} and denoting $\dot f:=f_t+u\cdot\nabla f$, we deduce that
\begin{align}
&\frac12\frac{d}{dt}\int(\mu|\nabla u|^2+(\lambda+\mu)(\dv u)^2) dx+\int\rho|\dot u|^2 dx\nonumber\\
=&\int\rho\dot u\cdot(u\cdot\nabla)u dx+\int\left[\left(p+\frac12|b|^2\right)\mathbb{I}_3-(b\otimes b)\right]:\nabla u_t dx\nonumber\\
\leq&\frac18\int\rho|\dot u|^2 dx+C\int|u|^2|\nabla u|^2 dx+\frac{d}{dt}\int\left[\left(p+\frac12|b|^2\right)\mathbb{I}_3-(b\otimes b)\right]:\nabla u dx\nonumber\\
&-\int p_t\dv u dx-\int\left[\frac12|b|^2\mathbb{I}_3-(b\otimes b)\right]_t:\nabla u dx.\label{3.9}
\end{align}
We remark that
\begin{align}
&-\int p_t\dv v dx=\int v\nabla p_t dx=\int v(\mu\Delta v_t+(\lambda+\mu)\nabla\dv v_t)dx\nonumber\\
=&-\frac12\frac{d}{dt}\int(\mu|\nabla v|^2+(\lambda+\mu)(\dv v)^2)dx.\label{3.10}
\end{align}
And according to \eqref{3.8} and \eqref{1.1},
\begin{align}
&-\int p_t\dv w dx\nonumber\\
=&-\int\left(\rho E+\frac12|b|^2\right)_t\dv w dx+\int\left(\frac12\rho|u|^2\right)_t\dv w dx+\int(b\cdot b_t)\dv w dx\nonumber\\
=&-\int\left(\rho E u+p u+|b|^2u-\frac12\mu\nabla|u|^2-\mu(u\cdot\nabla)u-\lambda u\dv u-(u\cdot b)b+\rt b\times b\right)\nabla\dv w dx\nonumber\\
&-\frac12\int\dv(\rho u)|u|^2\dv w dx+\int\rho u_t u\dv w dx+\int b b_t\dv w dx\nonumber\\
=&-\int\left(2\rho\theta u+|b|^2u-\frac12\mu\nabla|u|^2-\mu(u\cdot\nabla)u-\lambda u\dv u-(u\cdot b)b+\rt b\times b\right)\nabla\dv w dx\nonumber\\
&+\int\rho\dot u u\dv w dx+\int b b_t\dv w dx\nonumber\\
\leq&C\int\rho|u|^2 dx+C\|u\|_{L^6}^2+C\int|u|^2|\nabla u|^2 dx+C\|\nabla b\|_{L^2}^2+\delta_1\|\nabla^2w\|_{L^2}^2\nonumber\\
&+\delta_2\|\sqrt\rho\dot u\|_{L^2}^2+C\|\sqrt\rho u\|_{L^4}^2\|\nabla w\|_{L^4}^2+C\|\nabla w\|_{L^2}^2+\delta_3\|b_t\|_{L^2}^2\nonumber\\
\leq&C+C\|\nabla u\|_{L^2}^2+C\int|u|^2|\nabla u|^2 dx+C\|\nabla b\|_{L^2}^2+C\delta_1\|\sqrt\rho\dot u\|_{L^2}^2\nonumber\\
&+\delta_2\|\sqrt\rho\dot u\|_{L^2}^2+C\|\nabla w\|_{L^2}^\frac12\|\nabla^2w\|_{L^2}^\frac32+C\|\nabla w\|_{L^2}^2+\delta_3\|b_t\|_{L^2}^2\nonumber\\
\leq&C+C\|\nabla u\|_{L^2}^2+C\int|u|^2|\nabla u|^2 dx+C\|\nabla b\|_{L^2}^2\nonumber\\
&+C\delta_1\|\sqrt\rho\dot u\|_{L^2}^2+\delta_2\|\sqrt\rho\dot u\|_{L^2}^2+\delta_3\|b_t\|_{L^2}^2\label{3.11}
\end{align}
for any small $0<\delta_1,\delta_2$ and $\delta_3$.
Here we have used the Gagliardo-Nirenberg inequality $$\|\nabla w\|_{L^4}\leq C\|\nabla w\|_{L^2}^\frac14\|\nabla^2w\|_{L^2}^\frac34+C\|\nabla w\|_{L^2}$$ and $$\|\nabla w\|_{L^2}\leq\|\nabla u\|_{L^2}+\|\nabla v\|_{L^2}\leq C+\|\nabla u\|_{L^2}.$$
Observing that the last term of \eqref{3.9} can be bounded as
\begin{equation}
-\int\left[\frac12|b|^2\mathbb{I}_3-(b\otimes b)\right]_t:\nabla u dx\leq\delta_3\|b_t\|_{L^2}^2+C\|\nabla u\|_{L^2}^2.\label{3.12}
\end{equation}

On the other hand, testing \eqref{1.4} by $b_t-\Delta b$, we get
\begin{align}
&\frac{d}{dt}\int|\rt b|^2 dx+\int(|b_t|^2+|\Delta b|^2) dx\nonumber\\
=&\int|\rt(b\times u)|^2 dx\leq C\int|\nabla u|^2 dx+C\|u\|_{L^6}^2\|\nabla b\|_{L^3}^2\nonumber\\
\leq&C\|\nabla u\|_{L^2}^2+C\|u\|_{L^6}^2\|\nabla b\|_{L^2}^2+\frac12\|\Delta b\|_{L^2}^2+C\|u\|_{L^6}^4\|\nabla b\|_{L^2}^2.\label{3.13}
\end{align}
Here we have used the inequality
\begin{equation}
\|\nabla^2b\|_{L^2}\leq C\|\Delta b\|_{L^2}+C\|\nabla b\|_{L^2}\label{3.14}
\end{equation}
and the Gagliardo-Nirenberg inequality
\begin{equation}
\|\nabla b\|_{L^3}^2\leq C\|\nabla b\|_{L^2}\|\nabla^2b\|_{L^2}+C\|\nabla b\|_{L^2}^2.\label{3.15}
\end{equation}

Inserting \eqref{3.10}, \eqref{3.11} and \eqref{3.12} into \eqref{3.9} and combining \eqref{3.13} and choosing $\delta_1, \delta_2$ and $\delta_3$ suitably small and using the Gronwall inequality, we have
\begin{equation}
\sup\limits_{0\leq t\leq T}\int(|\nabla u|^2+|\nabla b|^2)dx+\int_0^T\int(|\sqrt\rho u_t|^2+|b_t|^2+|\nabla^2b|^2) dx dt\leq C.\label{3.16}
\end{equation}

Now we are in a position to give a high order regularity estimates of the solutions. The calculations were motivated by \cite{16}. First of all, we rewrite the equation \eqref{1.2} as $$\rho\dot u+\nabla p-Lu=g:=\rt b\times b$$ to find that
\begin{align*}
&\rho\dot u_t+\rho u\cdot\nabla\dot u+\nabla p_t+\dv(\nabla p\otimes u)\\
=&\mu[\Delta u_t+\dv(\Delta u\otimes u)]+(\lambda+\mu)[\nabla\dv u_t+\dv(\nabla\dv u\otimes u)]+g_t+\dv(g\otimes u).
\end{align*}
Testing the above equation by $\dot u$ and using \eqref{1.1}, we have
\begin{align}
&\frac12\frac{d}{dt}\int\rho|\dot u|^2 dx-\mu\int\dot u[\Delta u_t+\dv(\Delta u\otimes u)] dx\nonumber\\
&-(\lambda+\mu)\int\dot u[\nabla\dv u_t+\dv(\nabla\dv u\otimes u)] dx\nonumber\\
=&\int(p_t\dv\dot u+(u\cdot\nabla)\dot u\cdot\nabla p) dx+\int(g_t+\dv(g\otimes u))\dot u dx.\label{3.17}
\end{align}
As in \cite{16}, one can estimate the second and third terms in above equation as follows. $$-\int\dot u[\Delta u_t+\dv(\Delta u\otimes u)] dx\geq\int\left(\frac34|\nabla\dot u|^2-C|\nabla u|^4\right) dx,$$ and $$-\int\dot u[\nabla\dv u_t+\dv(\nabla\dv u\otimes u)] dx\geq\int\left(\frac12(\dv\dot u)^2-\frac18|\nabla\dot u|^2-C|\nabla u|^4\right) dx.$$
Since $p:=\rho\theta$, we rewrite \eqref{1.3} as follows,
\begin{equation}
p_t+\dv(p u)+p\dv u=\frac\mu2|\nabla u+\nabla u^t|^2+\lambda(\dv u)^2+|\rt b|^2.\label{3.18}
\end{equation}
Using \eqref{3.18}, as in \cite{16,11}, one can estimate the fourth term in \eqref{3.17} as follows.
\begin{equation}
\int(p_t\dv\dot u+(u\cdot\nabla)\dot u\cdot\nabla p) dx\leq C+C\|\nabla u\|_{L^4}^4+C\|\nabla b\|_{L^4}^4+\frac\mu8\|\nabla\dot u\|_{L^2}^2.\label{3.19}
\end{equation}
Using $b\cdot\nabla b+b\times\rt b=\frac12\nabla|b|^2$, and \eqref{3.16}, we bound the last term of \eqref{3.17} as follows.
\begin{align*}
&\int(g_t+\dv(g\otimes u))\dot u dx\\
=&\int\left[\dv\left(\frac12|b|^2\mathbb{I}_3-b\otimes b\right)+\dv(g\otimes u)\right]\dot u dx\\
=&-\int\left(\frac12|b|^2\mathbb{I}_3-b\otimes b+g\otimes u\right):\nabla\dot u dx\\
\leq&C(\|b\|_{L^4}^2+\|b\|_{L^\infty}\|\rt b\|_{L^3}\|u\|_{L^6})\|\nabla\dot u\|_{L^2}\\
\leq&C(1+\|\nabla b\|_{L^3})\|\nabla\dot u\|_{L^2}\leq\frac\mu8\|\nabla\dot u\|_{L^2}^2+C\|\nabla b\|_{L^3}^2+C.
\end{align*}
Inserting the those estimates into \eqref{3.16} and using
\begin{align*}
&\|\nabla b\|_{L^4}^4\leq C\|b\|_{L^\infty}^2\|b\|_{H^2}^2,\\
&\|\nabla u\|_{L^4}^4\leq\|\nabla u\|_{L^2}\|\nabla u\|_{L^6}^3\leq C(\|\nabla v\|_{L^6}+\|\nabla w\|_{L^6}^3)\leq C(1+\|\sqrt\rho\dot u\|_{L^2}^3),
\end{align*}
We have
\begin{align*}
&\frac12\frac{d}{dt}\int\rho|\dot u|^2 dx+\mu\int|\nabla\dot u|^2 dx+(\lambda+\mu)\int(\dv\dot u)^2 dx\\
\leq&C(1+\|\nabla u\|_{L^4}^4+\|b\|_{H^2}^2)\\
\leq&C+C\|\sqrt\rho\dot u\|_{L^2}^4+C\|b\|_{H^2}^2,
\end{align*}
which gives
\begin{equation}
\|\sqrt\rho\dot u\|_{L^\infty(0,T;L^2)}+\|\dot u\|_{L^2(0,T;H^1)}\leq C.\label{3.20}
\end{equation}

By the same calculations as in \cite{11}, it is easy to verify that
\begin{equation}
\sup\limits_{0\leq t\leq T}\|\nabla w\|_{H^1}+\int_0^T(\|\nabla^2w\|_{L^p}^2+\|\nabla w\|_{L^\infty}^2) dt\leq C\ \ \mathrm{with\ \ any}\ \ 2\leq p\leq6.\label{3.21}
\end{equation}

Applying $\partial_t$ to \eqref{1.4}, testing the result by $b_t$, using \eqref{3.3} and \eqref{3.20}, we have
\begin{align*}
&\frac12\frac{d}{dt}\int|b_t|^2 dx+\int|\rt b_t|^2 dx=-\int\partial_t(b\times u)\rt b _t dx\\
=&-\int(b_t\times u+b\times\dot u-b\times(u\cdot\nabla)u)\rt b_t dx\\
\leq&(\|b_t\|_{L^3}\|u\|_{L^6}+\|b\|_{L^3}\|\dot u\|_{L^6}+\|b\|_{L^\infty}\|u\cdot\nabla u\|_{L^2})\|\rt b_t\|_{L^2}\\
\leq&C(\|b_t\|_{L^3}+\|\dot u\|_{L^6}+\|u\cdot\nabla u\|_{L^2})\|\rt b_t\|_{L^2}\\
\leq&\frac12\|\rt b_t\|_{L^2}^2+C\|b_t\|_{L^2}^2+C\|\dot u\|_{L^6}^2+C\|u\cdot\nabla u\|_{L^2}^2,
\end{align*}
which implies
\begin{equation}
\|b_t\|_{L^\infty(0,T;L^2)}+\|b_t\|_{L^2(0,T;H^1)}\leq C.\label{3.22}
\end{equation}

This and \eqref{1.4} and \eqref{3.16} lead to
\begin{equation}
\|b\|_{L^\infty(0,T;H^2)}+\|b\|_{L^2(0,T;W^{2,6})}\leq C,\label{3.23}
\end{equation}
where we used $$\|u\|_{L^\infty(0,T;W^{1,6})}\leq\|v\|_{L^\infty(0,T;W^{1,6})}+\|w\|_{L^\infty(0,T;W^{1,6})}\leq C.$$

Direct calculations show that
\begin{equation}
\frac{d}{dt}\|\nabla\rho\|_{L^q}\leq C(1+\|\nabla u\|_{L^\infty})\|\nabla\rho\|_{L^q}+C\|\nabla^2u\|_{L^q},\label{3.24}
\end{equation}
and
\begin{equation}
\frac{d}{dt}\|\nabla p\|_{L^q}\leq C(1+\|\nabla u\|_{L^\infty})(\|\nabla p\|_{L^q}+\|\nabla^2u\|_{L^q})+C\|\nabla b\|_{L^\infty}\|\nabla^2b\|_{L^q}.\label{3.25}
\end{equation}
We bound the last term of \eqref{3.25} as follows.
\begin{equation}
\|\nabla b\|_{L^\infty}\|\nabla^2b\|_{L^q}\leq C(1+\|\nabla^2b\|_{L^q})\|\nabla^2 b\|_{L^q}\leq C+C\|\nabla^2b\|_{L^q}^2.\label{3.26}
\end{equation}

As in \cite{11}, it is easy to prove that
\begin{align}
&\|\nabla\rho\|_{L^\infty(0,T;L^q)}+\|\nabla p\|_{L^\infty(0,T;L^q)}\leq C,\label{3.27}\\
&\|\nabla u\|_{L^2(0,T;L^\infty)}+\|u\|_{L^\infty(0,T;H^2)}\leq C.\label{3.28}
\end{align}

This completes the proof.
\hfill$\square$

\medskip\medskip
{\bf Acknowledgements:}
 Fan is supported by NSFC (Grant No. 11171154). Li is supported partially by NSFC (Grant No. 11271184) and
   PAPD.

\end{document}